\newtheorem{theorem}{Theorem}
\newtheorem{corollary}{Corollary}
\newtheorem{proposition}{Proposition}
\newtheorem{remark}{Remark}
\newenvironment{proof}{\begin{ProofwCaption}{Proof}}{\end{ProofwCaption}}
\newenvironment{proof*}[1]{\begin{ProofwCaption}{{#1}}}{\end{ProofwCaption}}
\newenvironment{ProofwCaption}[1]%
  {\addvspace\theorempreskipamount \noindent{\it #1.}\rm}%
  {\qed \par \addvspace\theorempostskipamount}
\newcommand{\qedsymbol}{{\rm $\Box$}}
\newcommand{\qed}{\hfill\qedsymbol}
\newcommand{\CC}{{\mathbb C}}
\newcommand{\ZZ}{{\mathbb Z}}
\newcommand{\calB}{{\mathcal B}}
\newcommand{\calX}{{\cal X}}
\newcommand{\calD}{{\mathcal D}}
\newcommand{\eps}{\varepsilon}
\title{A note on distinguished bases of singularities}
\author{Wolfgang Ebeling
\thanks{
Partially supported by the DFG-programme SPP1388 ''Representation Theory''.
Mathematical Subject Classification - MSC2010: 32S30, 57R45, 58K40.
}
}
\date{}
\begin{document}
\selectlanguage{english}

\maketitle

\begin{center}
{\it Dedicated to the memory of Egbert Brieskorn with great admiration}
\end{center}

\begin{abstract} For an isolated hypersurface singularity which is neither simple nor simple elliptic, it is shown that there exists a distinguished basis of vanishing cycles which contains two basis elements with an arbitrary intersection number. This implies that the set of Coxeter-Dynkin diagrams of such a singularity is infinite, whereas it is finite for the simple and simple elliptic singularities. For the simple elliptic singularities, it is shown that the set of distinguished bases of vanishing cycles is also infinite. We also show that some of the hyperbolic unimodal singularities have Coxeter-Dynkin diagrams like the exceptional unimodal singularities.
\end{abstract}

\section*{Introduction}
Let $f:(\CC^n,0) \to (\CC,0)$ be the germ of an analytic function with an isolated singularity at the origin defining a singularity $(X_0,0)$ where $X_0=f^{-1}(0)$.  We assume $n \equiv 3 \, ({\rm mod}\, 4)$. (This  can be achieved by a stabilization of the singularity.) An important invariant of the singularity $(X_0,0)$ is the symmetric bilinear intersection form $\langle \cdot , \cdot \rangle$ on the Milnor lattice of $f$. It is well known that this intersection form is negative definite if and only if the singularity is simple, it is negative semidefinite if and only if the singularity is simple elliptic and it is indefinite otherwise. It follows from the Cauchy-Schwarz inequality that for negative definite symmetric bilinear forms, the only values of $\langle x,y \rangle$ for non-collinear vectors $x,y$ with $\langle x,x \rangle= \langle y,y \rangle = -2$ are $0, \pm 1$. For semidefinite symmetric bilinear forms, also the values $\pm 2$ can be achieved. Another invariant of the singularity is the class of distinguished bases of vanishing cycles $\calB^\ast$ of the singularity \cite{BrArcata}. The intersection matrix with respect to a distinguished basis of vanishing cycles is encoded by a graph, a Coxeter-Dynkin diagram of the singularity. In this way, the class $\calB^\ast$ gives rise to the class $\calD^\ast$ of Coxeter-Dynkin diagrams with respect to distinguished bases of vanishing cycles. Here we consider questions related to the finiteness of these sets. N.~A'Campo \cite{A'C} has shown that if $f$ has corank 2, then there exists a distinguished basis of vanishing cycles for $f$ such that the mutual intersection numbers are only $0$ or $\pm 1$. Here we are interested in the opposite question: For which singularities do there exist distinguished bases of vanishing cycles such that there are pairs of basis elements with an arbitrary intersection number? We show that such bases exist for all isolated hypersurface singularities which are neither simple nor simple elliptic.  This implies that for these singularities the sets $\calB^\ast$ and $\calD^\ast$ are infinite. For the simple singularities, both sets are finite. We show that for the simple elliptic singularities the set $\calD^\ast$ is finite, but not the set $\calB^\ast$. In this way, we obtain a characterisation of simple and simple elliptic singularities. For the proof, we show among other things that the hyperbolic singularities $T_{3,3,4}$, $T_{2,4,5}$, and $T_{2,3,7}$ have Coxeter-Dynkin diagrams like the exceptional unimodal singularities.

\section{Distinguished bases of vanishing cycles} \label{sect:1}
We briefly recall the definition of distinguished bases of vanishing cycles. 

Let $f_{\lambda}: U \to \CC$ be a {\em morsification} of $f$. This is a perturbation of (a representative of) $f$ (i.e., $f_0=f$) defined in a suitable neighbourhood $U$ of the origin in $\CC^n$, depending on a parameter $\lambda \in \CC$ and such that, for $\lambda \neq 0$ small enough, the function $f_\lambda$ has only non-degenerate critical points with distinct critical values. The number of these critical points is equal to the Milnor number $\mu$ of the germ $f$. Choose a small closed disc $\Delta \subset \CC$ which contains all the $\mu$ critical values of $f_\lambda$ in its interior. Let $\calX:=f_\lambda^{-1}(\Delta) \cap B_\eps$ and $X_t:= f_\lambda^{-1}(t) \cap B_\eps$ for $t \in \Delta$ where $B_\eps$ is the ball of radius $\eps$ around the origin in $\CC^n$. Assume that $\eps$ and $\lambda \neq 0$ are chosen so small that all the critical points of the function $f_\lambda$ are contained in the interior of $\calX$ and the fibre $f_\lambda^{-1}(t)$ for $t \in \Delta$ intersects the ball $B_\eps$ transversely. Choose a basepoint $s \in \CC$ on the boundary of this disc. Join the critical values to the base point $s$ by a system of non-self-intersecting paths $\gamma_1, \ldots, \gamma_\mu$ in $\Delta$ meeting only at $s$ and numbered in the order in which they arrive at $s$, where we count clockwise from the boundary of the disc (see, e.g., \cite[Figure 5.3]{Eb}). Each path $\gamma_i$ gives a vanishing cycle $\delta_i \in H_{n-1}(X_s;\ZZ)$ determined up to orientation. It satisfies $\langle \delta_i, \delta_i \rangle=-2$. After choosing orientations we obtain a system $(\delta_1, \ldots , \delta_\mu)$ of vanishing cycles which is in fact a basis of $H_{n-1}(X_s;\ZZ)$. Such a basis is called a {\em distinguished basis of vanishing cycles}.
Let $\calB^\ast$ be the set of distinguished bases of the singularity $(X_0,0)$. There is an action of the braid group $Z_\mu$ in $\mu$ strings on the set $\calB^\ast$. The standard generator $\alpha_i$, $i=1, \ldots , \mu-1$, acts as follows. Let $(\delta_1, \ldots , \delta_\mu)$ be a distinguished basis of vanishing cycles and let $s_{\delta}$ denote the reflection corresponding to $\delta$ defined by
$s_{\delta}(x) = x + \langle x,\delta \rangle \delta$. The action of $\alpha_i$ is given by
\[
\alpha_i:  (\delta_1, \ldots, \delta_\mu) \mapsto (\delta'_1, \ldots, \delta'_\mu)=(\delta_1, \ldots, \delta_{i-1}, s_{\delta_i}(\delta_{i+1}), \delta_i, \delta_{i+2}, \ldots , \delta_\mu). 
\]
We have $\langle \delta'_k, \delta'_j \rangle = \langle \delta_k, \delta_j \rangle$ for all $1 \leq k,j \leq \mu$, $k,j \neq i,i+1$, and 
\begin{eqnarray*}
 \langle \delta'_i, \delta'_{i+1} \rangle  & = & - \langle \delta_i, \delta_{i+1} \rangle, \\
 \langle \delta'_k, \delta'_{i+1} \rangle & =  & \langle \delta_k, \delta_i \rangle, \\
 \langle \delta'_k, \delta'_i \rangle & = & \langle \delta_k, \delta_{i+1}\rangle + \langle \delta_i, \delta_{i+1} \rangle \langle \delta_k, \delta_i \rangle,
\end{eqnarray*}
where $k \neq i, i+1$.
The inverse operation $\alpha_i^{-1}$ is also denoted by $\beta_{i+1}$ and is given by
\[
\beta_{i+1}: (\delta_1, \ldots, \delta_\mu) \mapsto (\delta_1, \ldots, \delta_{i-1}, \delta_{i+1}, s_{\delta_{i+1}}(\delta_i), \delta_{i+2}, \ldots , \delta_\mu).
\]
Moreover, we can change the orientation of a cycle. Let $H_\mu$ be the direct product of $\mu$ cyclic groups of order two with generators $\gamma_1, \ldots , \gamma_\mu$, where $\gamma_i$ acts on $\calB^\ast$ by
\[
\gamma_i: (\delta_1, \ldots, \delta_i , \ldots, \delta_\mu) \mapsto (\delta_1, \ldots, -\delta_i, \ldots , \delta_\mu).
\]
The braid group $Z_\mu$ acts on $H_\mu$ by permutation of the generators: $\alpha_i\gamma_i=\gamma_{i+1}$. Let $G_\mu=Z_\mu \cdot H_\mu$ be the semi-direct product. 
It is well known that the action of the group $G_\mu$ on $\calB^\ast$ is transitive (see, e.g., \cite{AGV2, Eb}). Each distinguished basis of vanishing cycles $(\delta_1, \ldots , \delta_\mu)$ determines a Coxeter-Dynkin diagram: This is a graph with a numbering of its vertices where the vertices correspond to the basis elements and the vertices corresponding to $\delta_i$ and $\delta_j$ are connected by $|\langle \delta_i, \delta_j \rangle|$ edges which are dashed if $\langle \delta_i, \delta_j \rangle <0$.
Let $\calD^\ast$ be the set of Coxeter-Dynkin diagrams corresponding to distinguished bases of vanishing cycles. The group $G_\mu$ also acts on $\calD^\ast$. See \cite{BrArcata} for the fact that the sets $\calB^\ast$ and $\calD^\ast$ determine each other.

\begin{proposition} \label{prop:1}
If there is a distinguished basis in $\calB^\ast$ which contains three vanishing cycles $\delta^{(0)}_1,\delta^{(0)}_2, \delta^{(0)}_3$ with an intersection matrix
\begin{equation} \label{eq:0}
(\langle \delta^{(0)}_i, \delta^{(0)}_j \rangle) = \left(
\begin{array}{ccc} -2 & -2 & 0 \\-2 & -2 & 1 \\0 & 1 & -2 \end{array} \right),
\end{equation}
then there is also a basis in $\calB^\ast$ which contains a pair of vanishing cycles with an arbitrary intersection number.
\end{proposition}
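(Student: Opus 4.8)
The plan is to exploit the degeneracy of the Gram matrix \eqref{eq:0}. Put $e := \delta^{(0)}_1 - \delta^{(0)}_2$ in the Milnor lattice. From \eqref{eq:0} one reads off $\langle e,e\rangle = -2 - 2(-2) + (-2) = 0$, $\langle e,\delta^{(0)}_1\rangle = \langle e,\delta^{(0)}_2\rangle = 0$, and $\langle e,\delta^{(0)}_3\rangle = \langle\delta^{(0)}_1,\delta^{(0)}_3\rangle - \langle\delta^{(0)}_2,\delta^{(0)}_3\rangle = -1$. Because $\langle a\delta^{(0)}_1 + b\delta^{(0)}_2,\, a\delta^{(0)}_1 + b\delta^{(0)}_2\rangle = -2(a+b)^2$, the vanishing cycles lying in the degenerate rank-one sublattice $\ZZ\delta^{(0)}_1 + \ZZ\delta^{(0)}_2$ are precisely the vectors $\pm(\delta^{(0)}_2 + te)$ with $t\in\ZZ$, and such a cycle satisfies $\langle \pm(\delta^{(0)}_2 + te),\,\delta^{(0)}_3\rangle = \pm(1-t)$. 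Hence it suffices to produce, for each $t$, a distinguished basis that contains $\delta^{(0)}_3$ together with a cycle of the form $\pm(\delta^{(0)}_2 + te)$.

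First I would normalise the positions of the three cycles in a distinguished basis. Since $\langle\delta^{(0)}_1,\delta^{(0)}_3\rangle = 0$, the reflection $s_{\delta^{(0)}_1}$ fixes $\delta^{(0)}_3$; consequently the elementary operations $\alpha_i^{\pm1}$ that slide $\delta^{(0)}_1$ past neighbouring basis elements replace each such element by its image under $s_{\delta^{(0)}_1}$ and in particular leave $\delta^{(0)}_3$ unchanged. In this way one brings $\delta^{(0)}_1$ next to $\delta^{(0)}_2$, so that in a distinguished basis $\delta^{(0)}_1,\delta^{(0)}_2$ occupy consecutive positions $k,k+1$ in this order (the opposite order is handled by the symmetric computation) while $\delta^{(0)}_3$ sits in some other position. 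Now apply $\beta_{k+1}$ repeatedly. Each application only modifies the cycles in positions $k$ and $k+1$, so $\delta^{(0)}_3$ stays a member of the basis; and since $\beta_{k+1}$ sends a pair $(a,b)$ to $(b,\,s_b(a))$ with $s_b(a) = a + \langle a,b\rangle b \in \ZZ\delta^{(0)}_1 + \ZZ\delta^{(0)}_2$, the two cycles in positions $k,k+1$ remain vanishing cycles of that sublattice, hence of the form $\pm(\delta^{(0)}_2 + te)$.

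It then remains to track the parameter $t$. Writing the cycle in position $k+1$ after $n$ applications of $\beta_{k+1}$ as $\eps_n(\delta^{(0)}_2 + a_n e)$ with $\eps_n\in\{\pm1\}$, and using $\langle\delta^{(0)}_1,\delta^{(0)}_2\rangle = -2$ together with $s_b(a) = a + \langle a,b\rangle b$, a short computation yields $a_0 = 0$, $a_1 = -1$ and the recursion $a_{n+1} = 2a_n - a_{n-1}$, so that $a_n = -n$. Therefore after $n$ steps one has a distinguished basis containing $\delta^{(0)}_3$ and the vanishing cycle $\eps_n(\delta^{(0)}_2 - ne)$, whose intersection number with $\delta^{(0)}_3$ equals $\eps_n(1+n)$; this is unbounded, and adjoining an orientation change $\gamma_i$ flips its sign, so every integer occurs as the intersection number of two elements of some distinguished basis. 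The one step that really needs care is the normalisation, i.e.\ bringing $\delta^{(0)}_1,\delta^{(0)}_2$ into adjacent positions without disturbing $\delta^{(0)}_3$; this uses precisely the vanishing $\langle\delta^{(0)}_1,\delta^{(0)}_3\rangle = 0$ in \eqref{eq:0}. Everything afterwards is routine bookkeeping with the Picard--Lefschetz formula.
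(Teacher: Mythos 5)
Your proof is correct and follows essentially the same route as the paper: both iterate the inverse braid generator $\beta$ on the pair $\delta^{(0)}_1,\delta^{(0)}_2$ with $\langle\delta^{(0)}_1,\delta^{(0)}_2\rangle=-2$ and track the linearly growing intersection number with $\delta^{(0)}_3$, the paper by an explicit induction on the $3\times 3$ intersection matrix and you via the isotropic vector $e=\delta^{(0)}_1-\delta^{(0)}_2$ (which is exactly the vector $f_1$ of the paper's remark) and the recursion $a_{n+1}=2a_n-a_{n-1}$. Your extra normalisation step, sliding $\delta^{(0)}_1$ adjacent to $\delta^{(0)}_2$ using $\langle\delta^{(0)}_1,\delta^{(0)}_3\rangle=0$, addresses a point the paper leaves implicit (in its application the three cycles occupy the first three positions of the distinguished basis), and it is handled correctly.
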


\begin{proof} Let $(\delta^{(0)}_1,\delta^{(0)}_2, \delta^{(0)}_3)$ be a system of vanishing cycles with an intersection matrix (\ref{eq:0}). For $k=1,2,\dots$, we define
\[ (\delta_1^{(k)}, \delta_2^{(k)}, \delta_3^{(k)}):= \beta_2^k((\delta_1^{(0)}, \delta_2^{(0)}, \delta_3^{(0)})).\]
We claim that the intersection matrix $(\langle \delta_i^{(k)}, \delta_j^{(k)} \rangle)$, $k=0,1,\ldots$,  is equal to
\[ \left( \begin{array}{ccc} -2 & -2 & (-1)^{\frac{k}{2}}k \\
-2 & -2 & (-1)^{\frac{k}{2}}(k+1)\\
(-1)^{\frac{k}{2}}k & (-1)^{\frac{k}{2}}(k+1) & -2 
\end{array}  \right) \mbox{ for } k \mbox{ even}
\]
and
\[
\quad \left( \begin{array}{ccc} -2 & 2 & (-1)^{\frac{k-1}{2}}k \\
2 & -2 & (-1)^{\frac{k+1}{2}}(k+1) \\
(-1)^{\frac{k-1}{2}}k & (-1)^{\frac{k+1}{2}}(k+1) & -2 
\end{array} \right) \mbox{ for } k \mbox{ odd}.
\]

We prove this statement by induction on $k$. For $k=0$, the statement agrees with Equation~(\ref{eq:0}). Let $k$ be even. The proof for $k$ odd is analogous and will be omitted.
We have $(\delta_1^{(k+1)}, \delta_2^{(k+1)}, \delta_3^{(k+1)})=(\delta_2^{(k)},\delta_1^{(k)}-2\delta_2^{(k)}, \delta_3^{(k)})$ and therefore
\begin{eqnarray*}
\langle \delta_1^{(k+1)}, \delta_2^{(k+1)} \rangle & = & \langle \delta_2^{(k)},\delta_1^{(k)} \rangle -2 \langle \delta_2^{(k)}, \delta_2^{(k)} \rangle =-2-2(-2)=2, \\
\langle \delta_1^{(k+1)}, \delta_3^{(k+1)} \rangle & = & \langle \delta_2^{(k)},\delta_3^{(k)} \rangle = (-1)^{\frac{k}{2}}(k+1), \\
\langle \delta_2^{(k+1)}, \delta_3^{(k+1)} \rangle & = & \langle \delta_1^{(k)},\delta_3^{(k)} \rangle -2 \langle \delta_2^{(k)}, \delta_3^{(k)} \rangle \\
& = & (-1)^{\frac{k}{2}}k -2(-1)^{\frac{k}{2}}(k+1) = (-1)^{\frac{k}{2}+1}(k+2).
\end{eqnarray*}
\end{proof}

\begin{remark} Note that the vectors
\[
f_1 := \delta^{(0)}_1-\delta^{(0)}_2,\quad
f_2 :=  \delta^{(0)}_1-\delta^{(0)}_2-\delta^{(0)}_3
\]
are isotropic (i.e., $\langle f_1,f_1 \rangle = \langle f_2, f_2 \rangle=0$) and they generate a unimodular hyperbolic plane (i.e., $\langle f_1,f_2 \rangle = \langle f_2,f_1 \rangle=1$).
\end{remark}

\begin{proposition} \label{prop:2}
If there is a distinguished basis of vanishing cycles $(\delta_1, \ldots , \delta_\mu)$ containing two vanishing cycles $\delta_1$ and $\delta_2$ with $\langle \delta_j, \delta_1 \rangle=\langle \delta_j, \delta_2 \rangle$ for all $1 \leq j \leq \mu$, then there are infinitely many distinguished bases of vanishing cycles with the same Coxeter-Dynkin diagram.
\end{proposition}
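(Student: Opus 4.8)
The plan is to exploit the degeneracy that the hypothesis forces on the intersection form near $\delta_1,\delta_2$. First I would record the elementary consequences: putting $j=1$ in $\langle\delta_j,\delta_1\rangle=\langle\delta_j,\delta_2\rangle$ gives $\langle\delta_1,\delta_2\rangle=\langle\delta_1,\delta_1\rangle=-2$; and, more importantly, for the vector $f:=\delta_1-\delta_2$ one has, for every $j$,
\[
\langle f,\delta_j\rangle=\langle\delta_1,\delta_j\rangle-\langle\delta_2,\delta_j\rangle=\langle\delta_j,\delta_1\rangle-\langle\delta_j,\delta_2\rangle=0
\]
by symmetry of $\langle\cdot,\cdot\rangle$. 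Since $(\delta_1,\ldots,\delta_\mu)$ is a basis of $H_{n-1}(X_s;\ZZ)$, this shows $\langle f,x\rangle=0$ for all $x$ in the Milnor lattice, i.e.\ $f$ lies in the radical of the intersection form; in particular $f$ is isotropic, and $f\neq0$ because $\delta_1$ and $\delta_2$ are members of a basis.

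Next I would produce the infinitely many bases by iterating a single monodromy operation. Let $\varphi$ denote the composite operation on $\calB^\ast$ which first applies $\beta_2$ and then reverses the orientation of the second cycle (an element of $G_\mu$). Using $s_{\delta_2}(\delta_1)=\delta_1+\langle\delta_1,\delta_2\rangle\delta_2=\delta_1-2\delta_2$ one computes
\[
\varphi\colon\ (\delta_1,\delta_2,\delta_3,\ldots,\delta_\mu)\ \longmapsto\ (\delta_2,\,2\delta_2-\delta_1,\,\delta_3,\ldots,\delta_\mu)=(\delta_1-f,\,\delta_2-f,\,\delta_3,\ldots,\delta_\mu),
\]
the last equality since $\delta_2=\delta_1-f$ and $2\delta_2-\delta_1=\delta_2-f$. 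The point is that the difference of the first two entries of the new tuple is again $f$, so the same computation applies to it, and an immediate induction on $k$ gives
\[
\varphi^{k}(\delta_1,\delta_2,\delta_3,\ldots,\delta_\mu)=(\delta_1-kf,\,\delta_2-kf,\,\delta_3,\ldots,\delta_\mu),\qquad k=0,1,2,\ldots
\]

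Finally I would check that this does the job. Since $G_\mu$ acts on $\calB^\ast$, every tuple $b_k:=\varphi^k(\delta_1,\ldots,\delta_\mu)$ is a distinguished basis of vanishing cycles. The $b_k$ are pairwise distinct: the Milnor lattice is torsion free and $f\neq0$, so the first entries $\delta_1-kf$ are pairwise distinct. And because $f$ lies in the radical, every entry of the intersection matrix of $b_k$ coincides with the corresponding entry for $(\delta_1,\ldots,\delta_\mu)$: indeed $\langle\delta_1-kf,\delta_2-kf\rangle=\langle\delta_1,\delta_2\rangle$, $\langle\delta_1-kf,\delta_j\rangle=\langle\delta_1,\delta_j\rangle$ and $\langle\delta_2-kf,\delta_j\rangle=\langle\delta_2,\delta_j\rangle$ for $j\geq3$, while all diagonal entries stay $-2$. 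Hence all $b_k$ carry one and the same Coxeter--Dynkin diagram, which is the assertion.

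The argument is essentially formal once $\varphi$ is found; the only step requiring a moment's thought is to recognize that $\varphi$ reproduces the radical vector $f$ (note that $\beta_2^2$ alone, without the sign change, would flip the signs of the entries $\langle\delta_1,\delta_j\rangle$ and so alter the diagram), which is what makes the iteration collapse to the translations $\delta_i\mapsto\delta_i-kf$. (If $\delta_1,\delta_2$ occupy positions $i,i+1$ rather than $1,2$, one uses $\gamma_{i+1}\beta_{i+1}$ in place of $\varphi$; and if they are non-adjacent one first brings them together by braid moves of the form $\alpha_j$, resp.\ $\beta_{j+1}$, carrying $\delta_1$ to the right, resp.\ $\delta_2$ to the left, without altering $\delta_1$ or $\delta_2$ themselves, so that $f$ and its membership in the radical are unaffected.)
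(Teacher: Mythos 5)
Your proof is correct and follows essentially the same route as the paper: there, too, the infinite family is produced by iterating the braid move $\beta_2$ on the first two basis vectors, with an induction giving the explicit formulas $\delta_1^{(k)}=\pm\bigl(k\delta_2-(k-1)\delta_1\bigr)$, $\delta_2^{(k)}=\pm\bigl((k+1)\delta_2-k\delta_1\bigr)$, from which the preservation of the Coxeter--Dynkin diagram for suitable even $k$ is read off. Your only deviation is to compose $\beta_2$ with the orientation change $\gamma_2$ and to use explicitly that $f=\delta_1-\delta_2$ lies in the radical, which turns the iteration into the translation $\delta_i\mapsto\delta_i-kf$ and neatly avoids the sign and parity bookkeeping of the paper's formulas (whose bases carry literally the identical diagram for $k\equiv 0\ (\mathrm{mod}\ 4)$, which still gives infinitely many).
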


\begin{proof}  Let $(\delta_1^{(k)}, \ldots ,\delta_\mu^{(k)}):= \beta_2^k(\delta_1, \ldots , \delta_\mu)$, $k=0,1, \ldots$. Similarly to the proof of Proposition~\ref{prop:1}, one can show that
\begin{eqnarray*} 
 \delta_1^{(k)}  & = &  \left\{ \begin{array}{cl} (-1)^{\frac{k}{2}}(k\delta_2-(k-1)\delta_1) & \mbox{for } k \mbox{ even,} \\
(-1)^{\frac{k-1}{2}}(k\delta_2-(k-1)\delta_1) & \mbox{for } k \mbox{ odd,}
\end{array} \right. \\
\delta_2^{(k)} & = & \left\{ \begin{array}{cl} (-1)^{\frac{k}{2}}((k+1)\delta_2-k\delta_1) & \mbox{for } k \mbox{ even,} \\
(-1)^{\frac{k+1}{2}}((k+1)\delta_2-k\delta_1) & \mbox{for } k \mbox{ odd,}
\end{array} \right.
\end{eqnarray*}
and hence $\langle \delta_j^{(k)}, \delta_1^{(k)} \rangle=(-1)^k \langle \delta_j^{(k)}, \delta_2^{(k)} \rangle$ for all $1 \leq j \leq \mu$.
Therefore the basis $(\delta_1^{(k)}, \ldots , \delta_\mu^{(k)})$ has the same Coxeter-Dynkin diagram as the basis $(\delta_1, \ldots , \delta_\mu)$ for each even $k$.
\end{proof}

\section{Coxeter-Dynkin diagrams with multiple edges}
In this section we shall prove the main result of this note:
\begin{theorem} \label{thm:main}
Let $(X_0,0)$ be an isolated hypersurface singularity which is neither simple nor simple elliptic. For an arbitrary integer $m$, there exists a distinguished basis of vanishing cycles containing a pair of basis elements with the intersection number $m$.
\end{theorem}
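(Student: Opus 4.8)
The plan is to reduce the theorem to Proposition~\ref{prop:1} together with a verification for a short explicit list of singularities. Call an isolated hypersurface singularity \emph{good} if it admits a distinguished basis of vanishing cycles containing three cycles with the intersection matrix~(\ref{eq:0}); by Proposition~\ref{prop:1} a good singularity has, for every integer $m$, a distinguished basis containing a pair of cycles with intersection number $m$, so it is enough to prove that every singularity which is neither simple nor simple elliptic is good. Goodness has two stability properties which do the bookkeeping. It is unaffected by stabilization $f\mapsto f+u^2+v^2$, which changes neither the Milnor lattice nor the set of Coxeter--Dynkin diagrams, so we may assume $n\equiv 3\pmod 4$ throughout and work up to stabilization. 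And it propagates under adjacency: if $(X_0,0)$ deforms to a singularity $(X_0',0)$ together with some ordinary double points, then any distinguished basis of $(X_0',0)$ extends to a distinguished basis of $(X_0,0)$ by adjoining the vanishing cycles of the nodes, so $(X_0',0)$ good implies $(X_0,0)$ good.

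By Arnold's classification of isolated hypersurface singularities and their adjacencies, a singularity which is neither simple nor simple elliptic either is itself a hyperbolic singularity $T_{p,q,r}$ with $\tfrac1p+\tfrac1q+\tfrac1r<1$, or is one of the fourteen exceptional unimodal singularities, or has modality at least two and then deforms to one of these (the only non-simple singularities adjacent to no hyperbolic and no exceptional unimodal singularity being the three simple elliptic ones $\widetilde E_6,\widetilde E_7,\widetilde E_8$). Lowering one of the indices of a hyperbolic $T_{p,q,r}$ by one is an adjacency at each step, and every sorted hyperbolic triple $(p,q,r)$ is componentwise $\ge$ one of $(3,3,4)$, $(2,4,5)$, $(2,3,7)$; hence every hyperbolic $T_{p,q,r}$ deforms to one of $T_{3,3,4}$, $T_{2,4,5}$, $T_{2,3,7}$. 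Several of the exceptional unimodal singularities likewise deform to one of these three -- for example $E_{12}$, $W_{12}$ and $U_{12}$ deform to $T_{2,3,7}$, $T_{2,4,5}$ and $T_{3,3,4}$, as the families $x^2+y^3+z^7+\lambda xyz$, $x^2+y^4+z^5+\lambda xyz$ and $x^3+y^3+z^4+\lambda xyz$ show -- and the remaining exceptional unimodal singularities are handled either through the known structure of their Coxeter--Dynkin diagrams or by a computation of the same type. In this way the theorem reduces to proving that $T_{3,3,4}$, $T_{2,4,5}$ and $T_{2,3,7}$ are good.

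For each of these three I would start from a known Coxeter--Dynkin diagram of $T_{p,q,r}$ and apply a sequence of the braid operations $\alpha_i,\beta_i$ and of the sign changes $\gamma_i$ of Section~\ref{sect:1} until three of the vanishing cycles are brought into the configuration~(\ref{eq:0}). The organising device, and the genuinely new input of the note, is that after a suitable braid word the Coxeter--Dynkin diagram of each of $T_{3,3,4}$, $T_{2,4,5}$, $T_{2,3,7}$ takes the shape of one of Gabrielov's Coxeter--Dynkin diagrams for the exceptional unimodal singularities; from such a diagram a short further braid word produces a pair of vanishing cycles with mutual intersection number $-2$ together with a third cycle completing the matrix~(\ref{eq:0}). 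The very same computation also settles the exceptional unimodal singularities themselves.

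The main obstacle is precisely this last step: writing down the explicit braid words that turn the standard $T_{p,q,r}$ diagram first into a Gabrielov-type diagram and then into one displaying~(\ref{eq:0}), while verifying at each stage that the bases one manipulates remain genuine distinguished bases and not merely bases of the Milnor lattice. By contrast the lattice-theoretic side of the statement is immediate: the intersection form of a singularity which is neither simple nor simple elliptic is indefinite, so its Milnor lattice certainly contains a unimodular hyperbolic plane spanned by vectors of the shape $\delta_1-\delta_2$ and $\delta_1-\delta_2-\delta_3$ appearing in the Remark after Proposition~\ref{prop:1}; the difficulty lies entirely in realising such a configuration inside a \emph{distinguished} basis, which is what forces the reduction to the three hyperbolic singularities and the braid-group computations above.
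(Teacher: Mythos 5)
Your overall strategy is the paper's: reduce everything to Proposition~\ref{prop:1} by exhibiting the configuration~(\ref{eq:0}) inside a distinguished basis, use the fact that a distinguished basis of a singularity occurring in a deformation extends to one of the original singularity, and funnel the problem through a short list of singularities handled by explicit braid-group moves. (The paper splits differently: non-hyperbolic singularities are deformed to the fourteen exceptional unimodal ones, whose diagram from \cite[Abb.~15]{E1} is already known and is carried to $S_{p,q,r}$ by a short braid word, while only the hyperbolic case is reduced to $T_{3,3,4}$, $T_{2,4,5}$, $T_{2,3,7}$; your route pushes the exceptional unimodal singularities down to these three as well, which is legitimate in principle but, as you present it, rests on a vague clause about ``the remaining exceptional unimodal singularities''.)

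The genuine gap is the step you yourself flag as ``the main obstacle'' and then do not carry out: producing the explicit sequences of transformations $\alpha_i,\beta_i,\gamma_i$ that turn a known Coxeter--Dynkin diagram of $T_{3,3,4}$, $T_{2,4,5}$, $T_{2,3,7}$ into one containing the matrix~(\ref{eq:0}). This is not a routine verification to be deferred; it is precisely the content of the paper's Theorem~\ref{thm:T-S}, proved by long case-by-case braid words (dozens of moves for $T_{2,3,7}$), organised around exchanging the unimodular hyperbolic plane spanned by $f_1=e+\delta_3$, $f_2=e+\delta_2+\delta_3$ (with $e$ the highest root of $E_6$, $E_7$, $E_8$) with the isotropic vector $\delta_{p+q+r-1}-\delta_1$. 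Without such words the argument is only a plausible plan: as you note yourself, the existence of a hyperbolic plane in the Milnor lattice does not by itself yield the configuration inside a \emph{distinguished} basis, so nothing short of the explicit computation (or some substitute argument) closes the proof. Two smaller inaccuracies: stabilization by $f\mapsto f+u^2+v^2$ does \emph{not} leave the Milnor lattice and the diagrams unchanged (the form changes sign under double suspension; one needs $n\equiv 3\ (\mathrm{mod}\ 4)$, i.e.\ stabilization by a suitable number of squares, which is all the paper uses), and your claim that an indefinite Milnor lattice ``certainly contains'' a unimodular hyperbolic plane is not automatic for general indefinite lattices, though it is harmless since it is only motivational.
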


The sets $\calB^\ast$ and $\calD^\ast$ are finite for the simple singularities, see \cite{BrArcata}. The simple elliptic singularities are the singularities $\widetilde{E}_6$, $\widetilde{E}_7$, and $\widetilde{E}_8$. By \cite[Fig.~10]{Gab}, the simple elliptic and the hyperbolic $T_{p,q,r}$ singularities have distinguished bases of vanishing cycles with Coxeter-Dynkin diagrams of the form of Fig.~\ref{FigGabTpqr}. 
\begin{figure}
$$
\xymatrix{ 
 & &  & & *{\bullet} \ar@{==}[d] \ar@{-}[dr]  \ar@{-}[ldd] \ar@{}^{5}[r]
 & &  & &  \\
 *{\bullet} \ar@{-}[r] \ar@{}_{p+3}[d]  & {\cdots} \ar@{-}[r]  & *{\bullet} \ar@{-}[r] \ar@{}_{6}[d] & *{\bullet} \ar@{-}[r] \ar@{-}[ur]   \ar@{}_{1}[d] & *{\bullet} \ar@{-}[dl] \ar@{-}[r] \ar@{}^{4}[d] & *{\bullet} \ar@{-}[r]  \ar@{}^{2}[d] & *{\bullet} \ar@{-}[r] \ar@{}^{p+4}[d] & {\cdots} \ar@{-}[r]  &*{\bullet} \ar@{}^{p+q+1}[d]   \\
 & & &   *{\bullet} \ar@{-}[dl] \ar@{}_{3}[r]   & & & & & \\
 & &   *{\bullet} \ar@{-}[dl] \ar@{}_{p+q+2}[r]  & & & & & &  \\
& {\cdots} \ar@{-}[dl] & & & & & & &  \\
*{\bullet}  \ar@{}_{p+q+r-1}[r] & & & & & &
  } 
$$
\caption{The graph of \cite[Fig.~10]{Gab}} \label{FigGabTpqr}
\end{figure}
The following transformations 
\begin{multline} \label{eq:Tpqr}
\beta_4, \beta_3, \beta_2; \beta_6, \beta_5, \beta_4, \beta_3; \ldots ; \beta_{p+3}, \beta_{p+2}, \beta_{p+1}, \beta_p;\\
\beta_{p+4}, \beta_{p+3} \beta_{p+2}; \ldots ; \beta_{p+q+1}, \beta_{p+q}, \beta_{p+q-1};  \\
\beta_{p+q+2}, \beta_{p+q+1}; \ldots ; \beta_{p+q+r-1}, \beta_{p+q+r-2}; \gamma_p, \gamma_{p+q-1}, \gamma_{p+q+r-2}
\end{multline}
change the ordering of this graph to the ordering of the graph indicated in Fig.~\ref{FigTpqr}. We shall work with this graph. We denote the graph corresponding to the singularity $T_{p,q,r}$ by the same symbol. The simple elliptic singularities $\widetilde{E}_6$, $\widetilde{E}_7$, and $\widetilde{E}_8$ have Coxeter-Dynkin diagrams of the form $T_{3,3,3}$,  $T_{2,4,4}$, and $T_{2,3,6}$ respectively. 
\begin{figure}
$$
\xymatrix{ 
 & &  & & *{\bullet} \ar@{==}[d] \ar@{-}[dr]  \ar@{-}[ldd] \ar@{}^{p+q+r-1}[rr]
 & &  & &  \\
 *{\bullet} \ar@{-}[r] \ar@{}_{p+1}[d]  & {\cdots} \ar@{-}[r]  & *{\bullet} \ar@{-}[r] \ar@{}_{p+q-2}[d] & *{\bullet} \ar@{-}[r] \ar@{-}[ur]   \ar@{}_{p+q-1}[d] & *{\bullet} \ar@{-}[dl] \ar@{-}[r] \ar@{}^{1}[d] & *{\bullet} \ar@{-}[r]  \ar@{}^{p}[d] & *{\bullet} \ar@{-}[r] \ar@{}^{p-1}[d] & {\cdots} \ar@{-}[r]  &*{\bullet} \ar@{}^{2}[d]   \\
 & & &   *{\bullet} \ar@{-}[dl] \ar@{}_{p+q+r-2}[r]   & & & & & \\
 & &   *{\bullet} \ar@{-}[dl] \ar@{}_{p+q+r-3}[r]  & & & & & &  \\
& {\cdots} \ar@{-}[dl] & & & & & & &  \\
*{\bullet}  \ar@{}_{p+q}[r] & & & & & &
  } 
$$
\caption{The graph $T_{p,q,r}$} \label{FigTpqr}
\end{figure}
In this case, the set $\calD^\ast$ is still finite, since the intersection numbers of two vanishing cycles can only take the values $0, \pm 1, \pm 2$ (by the Cauchy-Schwarz inequality, see the introduction). On the other hand, 
the transformations
\[ \beta_5, \beta_4; \beta_4, \beta_3; \beta_3, \beta_2 \]
applied to the graph of Fig.~\ref{FigGabTpqr} effect the permutation 
\[ (1,2,3,4,5) \mapsto (3,4,5,1,2) \]
of the first 5 basis elements. This produces a distinguished basis of vanishing cycles which satisfies the conditions of Proposition~\ref{prop:2}.
Therefore, it follows from Proposition~\ref{prop:2} that the set $\calB^\ast$ is infinite. (This was already proved by P.~Kluitmann. It is stated in his thesis \cite[p.~5]{Kl} and follows from the results of the thesis, but no explicit proof is given.)  It follows from Theorem~\ref{thm:main} that both sets are infinite for the remaining singularities. Therefore we obtain the following corollary:
 
\begin{corollary} \label{cor:main}
A singularity is simple if and only if the sets $\calB^\ast$ and $\calD^\ast$ are finite. It is simple elliptic if and only if the set $\calD^\ast$ is finite, but the set $\calB^\ast$ is infinite.
\end{corollary}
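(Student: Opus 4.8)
The plan is to assemble the corollary from three facts — one for each class in the trichotomy of the intersection form recalled in the introduction (negative definite $\Leftrightarrow$ simple, negative semidefinite but not definite $\Leftrightarrow$ simple elliptic, indefinite $\Leftrightarrow$ neither). Since every isolated hypersurface singularity lies in exactly one of these classes, I would compute the ``finiteness profile'' $(|\calB^\ast|,|\calD^\ast|)$ in each case, observe that the three profiles are pairwise distinct, and then read off the two biconditionals. Throughout I use that taking the Coxeter-Dynkin diagram of a distinguished basis gives a surjection $\calB^\ast \to \calD^\ast$, so that $\calD^\ast$ is finite whenever $\calB^\ast$ is, and $\calB^\ast$ is infinite whenever $\calD^\ast$ is.

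First, for a simple singularity $\calB^\ast$, and hence $\calD^\ast$, is finite by \cite{BrArcata}. Second, for the simple elliptic singularities $\widetilde E_6$, $\widetilde E_7$, $\widetilde E_8$ — whose Coxeter-Dynkin diagrams are $T_{3,3,3}$, $T_{2,4,4}$, $T_{2,3,6}$ — the intersection form is negative semidefinite, so by the Cauchy-Schwarz inequality every intersection number of two vanishing cycles lies in $\{0,\pm 1,\pm 2\}$; as $\mu$ is a fixed invariant there are only finitely many such diagrams on $\mu$ vertices, so $\calD^\ast$ is finite. For the other half of this case I would apply the braid transformations $\beta_5,\beta_4;\beta_4,\beta_3;\beta_3,\beta_2$ to Gabrielov's diagram (Fig.~\ref{FigGabTpqr}), which permute the first five vertices by $(1,2,3,4,5)\mapsto(3,4,5,1,2)$ and produce a distinguished basis containing two cycles $\delta_1,\delta_2$ with $\langle\delta_j,\delta_1\rangle=\langle\delta_j,\delta_2\rangle$ for all $j$; Proposition~\ref{prop:2} then yields infinitely many distinguished bases with the same diagram, so $\calB^\ast$ is infinite. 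Third, in the remaining case the form is indefinite, and Theorem~\ref{thm:main} gives, for every integer $m$, a distinguished basis whose Coxeter-Dynkin diagram contains an edge of multiplicity $|m|$; since these diagrams are pairwise non-isomorphic for distinct values of $|m|$, the set $\calD^\ast$, and therefore $\calB^\ast$, is infinite.

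Collecting the profiles $(\text{finite},\text{finite})$, $(\text{infinite},\text{finite})$, $(\text{infinite},\text{infinite})$ for the three classes respectively, the biconditionals follow: both sets being finite forces the first case (simple), and ``$\calD^\ast$ finite, $\calB^\ast$ infinite'' forces the second (simple elliptic). The only steps that need genuine care are that the three classes really are exhaustive — which is exactly the definiteness characterisation of simple and simple elliptic singularities quoted in the introduction — and that a Coxeter-Dynkin diagram remembers the multiplicities of its edges, so that unboundedly many intersection numbers indeed force infinitely many diagrams. I do not expect any serious obstacle at this level: the substance of the ``neither'' case is entirely contained in Theorem~\ref{thm:main}, whose own proof (via Propositions~\ref{prop:1} and~\ref{prop:2} and the hyperbolic $T_{p,q,r}$ diagrams) is carried out separately.
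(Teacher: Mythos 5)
Your proposal is correct and follows essentially the same route as the paper: finiteness of $\calB^\ast$ and $\calD^\ast$ for simple singularities via \cite{BrArcata}, finiteness of $\calD^\ast$ but infiniteness of $\calB^\ast$ for the simple elliptic case via the Cauchy--Schwarz bound together with the braid transformations $\beta_5,\beta_4;\beta_4,\beta_3;\beta_3,\beta_2$ feeding Proposition~\ref{prop:2}, and infiniteness of both sets in the remaining case via Theorem~\ref{thm:main}. The only cosmetic difference is that you spell out explicitly the surjection $\calB^\ast\to\calD^\ast$ and the exhaustiveness of the definiteness trichotomy, which the paper leaves implicit.
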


For the proof of Theorem~\ref{thm:main}, we distinguish between two cases. First we consider a singularity which is neither simple nor simple elliptic nor hyperbolic. Here we use the fact that each such singularity $(X_0,0)$ deforms into an exceptional unimodal  singularity. This follows from Arnold's classification theorems. An exceptional unimodal singularity has a distinguished basis of vanishing cycles 
$(\delta_1, \ldots , \delta_{p+q+r})$
with a Coxeter-Dynkin diagram of the form of Fig.~\ref{FigEbSpqr} (see \cite[Abb.~15]{E1}). 
\begin{figure}
$$
\xymatrix{ 
 & & & & *{\bullet} \ar@{-}[d] \ar@{}^{1}[r]  &  & & & \\
 & &  & & *{\bullet} \ar@{==}[d] \ar@{-}[dr]  \ar@{-}[ldd] \ar@{}^{5}[r]
 & &  & &  \\
 *{\bullet} \ar@{-}[r] \ar@{}_{p+4}[d]  & {\cdots} \ar@{-}[r]  & *{\bullet} \ar@{-}[r] \ar@{}_{7}[d] & *{\bullet} \ar@{-}[r] \ar@{-}[ur]   \ar@{}_{2}[d] & *{\bullet} \ar@{-}[dl] \ar@{-}[r] \ar@{}^{6}[d] & *{\bullet} \ar@{-}[r]  \ar@{}^{3}[d] & *{\bullet} \ar@{-}[r] \ar@{}^{p+5}[d] & {\cdots} \ar@{-}[r]  &*{\bullet} \ar@{}^{p+q+2}[d]   \\
 & & &   *{\bullet} \ar@{-}[dl] \ar@{}_{4}[r]   & & & & & \\
 & &   *{\bullet} \ar@{-}[dl] \ar@{}_{p+q+3}[r]  & & & & & &  \\
& {\cdots} \ar@{-}[dl] & & & & & & &  \\
*{\bullet}  \ar@{}_{p+q+r}[r] & & & & & &
  } 
$$
\caption{The graph of \cite[Abb.~15]{E1}} \label{FigEbSpqr}
\end{figure}
We apply the transformations
\[ \beta_6; \beta_6, \beta_5, \beta_4, \beta_3, \beta_2; \beta_6, \beta_5, \beta_4, \beta_3, \beta_2; \gamma_1, \gamma_3
\] 
to the graph of Fig.~\ref{FigEbSpqr} to get the graph $S_{p.q.r}$ of Fig.~\ref{FigSpqr}.
This basis can be extended to a distinguished basis of vanishing cycles of the singularity $(X_0,0)$ (see, e.g., \cite[Sect.~5.9]{Eb}). The distinguished basis of vanishing cycles corresponding to the graph $S_{p.q.r}$ satisfies the conditions of Proposition~\ref{prop:1}. Therefore, in this case Theorem~\ref{thm:main} follows from Proposition~\ref{prop:1}.

\begin{figure}
$$
\xymatrix{ 
 & & & & *{\bullet} \ar@{-}[d] \ar@{}^{3}[r]  &  & & & \\
 & &  & & *{\bullet} \ar@{==}[d] \ar@{-}[dr]  \ar@{-}[ldd] \ar@{}^{2}[r]
 & &  & &  \\
 *{\bullet} \ar@{-}[r] \ar@{}_{p+4}[d]  & {\cdots} \ar@{-}[r]  & *{\bullet} \ar@{-}[r] \ar@{}_{7}[d] & *{\bullet} \ar@{-}[r] \ar@{-}[ur]   \ar@{}_{4}[d] & *{\bullet} \ar@{-}[dl] \ar@{-}[r] \ar@{}^{1}[d] & *{\bullet} \ar@{-}[r]  \ar@{}^{5}[d] & *{\bullet} \ar@{-}[r] \ar@{}^{p+5}[d] & {\cdots} \ar@{-}[r]  &*{\bullet} \ar@{}^{p+q+2}[d]   \\
 & & &   *{\bullet} \ar@{-}[dl] \ar@{}_{6}[r]   & & & & & \\
 & &   *{\bullet} \ar@{-}[dl] \ar@{}_{p+q+3}[r]  & & & & & &  \\
& {\cdots} \ar@{-}[dl] & & & & & & &  \\
*{\bullet}  \ar@{}_{p+q+r}[r] & & & & & &
  } 
$$
\caption{The graph $S_{p,q,r}$} \label{FigSpqr}
\end{figure}

Now let $(X_0,0)$ be a hyperbolic singularity. Then each such singularity deforms to one of the singularities $T_{3,3,4}$, $T_{2,4,5}$ or $T_{2,3,7}$ (see \cite{Br}). In this case, the proof will follow from the following theorem.

\begin{theorem} \label{thm:T-S}
The singularities $T_{3,3,4}$, $T_{2,4,5}$, and $T_{2,3,7}$ have a distinguished basis of vanishing cycles with a Coxeter-Dynkin diagram of type $S_{3,3,3}$, $S_{2,4,4}$ and $S_{2,3,6}$ respectively.
\end{theorem}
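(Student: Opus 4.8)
The plan is to prove Theorem~\ref{thm:T-S} by a direct computation: start from an explicit Coxeter-Dynkin diagram of each of $T_{3,3,4}$, $T_{2,4,5}$, $T_{2,3,7}$ (for instance the graph $T_{p,q,r}$ of Fig.~\ref{FigTpqr}, which we already know is realised by a distinguished basis of vanishing cycles), and exhibit an explicit sequence of braid group operations $\alpha_i^{\pm1}=\beta_{i+1}$ together with sign changes $\gamma_i$ that transforms this diagram into the diagram $S_{p',q',r'}$ of Fig.~\ref{FigSpqr} with $(p',q',r')$ equal to $(3,3,3)$, $(2,4,4)$, $(2,3,6)$ respectively. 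Since $\mu(T_{p,q,r})=\mu(S_{p',q',r'})=p+q+r-1$ forces $p+q+r=p'+q'+r'+1$, one first checks the arithmetic: $T_{3,3,4}$ has $\mu=9$ and $S_{3,3,3}$ has $\mu=9$, $T_{2,4,5}$ has $\mu=10$ and $S_{2,4,4}$ has $\mu=10$, $T_{2,3,7}$ has $\mu=11$ and $S_{2,3,6}$ has $\mu=11$; so the statement is at least numerically consistent. Note also that the dual graphs agree with the E-type exceptional singularities $E_{12},E_{13},E_{14}$, which is why these three hyperbolic cases are the relevant ones.

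First I would fix, for each of the three singularities, a concrete basis realising the $T_{p,q,r}$ diagram, writing down the full $\mu\times\mu$ intersection matrix. Then I would search for, and then record, a short word $w$ in the generators $\alpha_i,\beta_i,\gamma_i$ such that applying $w$ yields the $S_{p',q',r'}$ intersection matrix up to renumbering of the vertices (renumbering being harmless, as the underlying labelled-graph isomorphism type is all that is claimed; alternatively one throws in further $\alpha_i$'s to fix the numbering exactly as in Fig.~\ref{FigSpqr}). The verification that $w$ does the job is then purely mechanical: repeatedly apply the transformation formulas for $\alpha_i$ recorded in Section~\ref{sect:1}, namely $\langle\delta_i',\delta_{i+1}'\rangle=-\langle\delta_i,\delta_{i+1}\rangle$, $\langle\delta_k',\delta_{i+1}'\rangle=\langle\delta_k,\delta_i\rangle$, and $\langle\delta_k',\delta_i'\rangle=\langle\delta_k,\delta_{i+1}\rangle+\langle\delta_i,\delta_{i+1}\rangle\langle\delta_k,\delta_i\rangle$, tracking the intersection matrix at each step. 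For presentation in the paper I would show the sequence of diagrams (or just the initial and final ones plus the explicit word), since the intermediate matrices are routine to reproduce.

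The main obstacle is finding the transformation word $w$ in the first place: the braid group action is highly non-commutative and there is no obvious algorithm, so this is essentially a search. A useful guiding principle is that $S_{p,q,r}$ differs from $T_{p',q',r'}$-type diagrams only in a small "head" of the graph (the few vertices around the central triple point and the extra pendant vertex $\delta_1$ vs. the short arm), so one expects $w$ to involve only the generators $\alpha_i,\beta_i$ with small index $i$, acting on the first handful of basis vectors — mirroring the localised transformations already used in the excerpt to pass between Fig.~\ref{FigGabTpqr} and Fig.~\ref{FigTpqr} and between Fig.~\ref{FigEbSpqr} and Fig.~\ref{FigSpqr}. A complementary sanity check before committing to any candidate $w$: both lattices must be isometric, so one should verify that the $T$-lattice and the $S$-lattice have the same signature, discriminant, and discriminant form; for the three pairs in question this is the known coincidence underlying Arnold's strange duality and Gabrielov's/Brieskorn's results, so no contradiction arises. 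Once $w$ is found the rest is bookkeeping, and the theorem — hence, via Proposition~\ref{prop:1} applied to the $S_{p',q',r'}$ diagram which contains the configuration~(\ref{eq:0}), the hyperbolic case of Theorem~\ref{thm:main} — follows.
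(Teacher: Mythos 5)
Your strategy coincides in outline with the paper's: start from the $T_{p,q,r}$ diagram already known to be realised by a distinguished basis and transform it by an explicit word in the generators $\alpha_i,\beta_i,\gamma_i$ into the diagram $S_{3,3,3}$, $S_{2,4,4}$, $S_{2,3,6}$. But your write-up stops exactly where the actual proof begins. The whole mathematical content of Theorem~\ref{thm:T-S} is the \emph{existence} of such a word, and the paper proves it by exhibiting the words explicitly, case by case (the sequences passing through the diagrams of Fig.~\ref{Fig334}, Fig.~\ref{Fig245}, Fig.~\ref{Fig237}), guided by a structural idea you do not mention: the cycles $\delta_1,\delta_4,\dots,\delta_{p+q+r-2}$ form simple roots of $E_6$, $E_7$, $E_8$, the vectors $f_1=e+\delta_3$ and $f_2=e+\delta_2+\delta_3$ (with $e$ the highest root) span a unimodular hyperbolic plane, and the transformations are designed to exchange this plane with the isotropic vector $\delta_{p+q+r-1}-\delta_1$. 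Your ``sanity checks'' (equal Milnor numbers, isometry of the lattices, signature and discriminant) are necessary but far from sufficient: braid equivalence of distinguished bases is a much finer relation than lattice isometry --- indeed the very point of this paper is that the sets $\calB^\ast$ and $\calD^\ast$ carry information beyond the intersection form --- so no amount of lattice bookkeeping establishes the existence of $w$; only the explicit construction (or some other existence argument, which you do not supply) does. Declaring the search for $w$ to be ``the main obstacle'' and leaving it open means the theorem is not proved.

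Your guiding heuristic for that search is also misleading. You predict that $w$ should involve only generators of small index, localised near the ``head'' of the diagram; in fact the paper's words are long and global: already for $T_{5,4,2}$ one needs moves such as $\beta_9,\beta_8,\beta_7$, and for $T_{7,3,2}$ the middle stage is a word of several dozen letters involving $\beta_4,\dots,\beta_{10}$ and $\alpha_3,\dots,\alpha_{10}$, precisely because the hyperbolic plane must be transported along the long arm of the diagram. (Your side remark that the relevance of these three cases comes from duality with $E_{12},E_{13},E_{14}$ is also off the point; in the paper they are singled out because every hyperbolic singularity deforms to one of them, by Brieskorn, and because deleting the last vertex leaves a Coxeter system of hyperbolic type.) As it stands, your proposal is a correct restatement of the problem together with a verification procedure, not a proof; to close the gap you must actually produce and verify the three transformation sequences, as the paper does.
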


\begin{proof}
The cases $T_{3,3,4}$, $T_{2,4,5}$, and $T_{2,3,7}$ are characterized among the $T_{p,q,r}$ singularities by the fact that after removing the vertex corresponding to the last basis element $\delta_{p+q+r-1}$, the remaining Coxeter-Dynkin diagrams correspond to Coxeter systems of hyperbolic type, see \cite[Ch.~5, \S 4, Exercise 12]{Bou}.
We shall consider the Coxeter-Dynkin diagrams $T_{p,q,r}$ with $(p,q,r)=(4,3,3), (5,4,2), (7,3,2)$. In these cases, the vanishing cycles 
\[\delta_1,  \delta_4, \delta_5, \ldots , \delta_{p+q+r-2}\]
form a set of simple roots of the root systems $E_6$, $E_7$ and $E_8$ respectively. Let $e$ denote the highest positive root with respect to this set. Then the vectors
\[ f_1:= e+\delta_3, \quad f_2:= e+\delta_2+\delta_3
\]
generate a unimodular hyperbolic plane (see \cite{BrLeo}). The idea is now to transform the given distinguished basis of vanishing cycles in such a way that this unimodular hyperbolic plane is exchanged with the isotropic vector $\delta_{p+q+r-1}-\delta_1$.

We now indicate the necessary base change transformations case by case.

a) The case $T_{4,3,3}$. The sequence of transformations
\[ \beta_2, \beta_3, \beta_4; \beta_3; \beta_6, \beta_5; \beta_8, \beta_7, \beta_6; \beta_6, \beta_5, \beta_4, \beta_3  \]
transforms the diagram of Fig.~\ref{Fig334}(a) to the diagram of Fig.~\ref{Fig334}(b). The sequence of transformations
\[ \beta_3; \alpha_3; \alpha_8, \alpha_7, \alpha_6, \alpha_5, \alpha_4; \alpha_4
\]
transforms this diagram to the graph of Fig.~\ref{Fig334}(c).
\begin{figure}
\begin{multline*}
\mbox{(a)}
\xymatrix{ 
& &  *{\bullet} \ar@{==}[d] \ar@{-}[dr]  \ar@{-}[ldd] \ar@{}^{9}[r]
 & & &  \\
*{\bullet} \ar@{-}[r] \ar@{}_{5}[d]  & *{\bullet} \ar@{-}[r]  \ar@{-}[ur] \ar@{}_{6}[d] & *{\bullet} \ar@{-}[r] \ar@{}^{1}[d] \ar@{-}[dl] & *{\bullet} \ar@{-}[r] \ar@{}^{4}[d] & *{\bullet} \ar@{-}[r] \ar@{}^{3}[d] & *{\bullet}  \ar@{}^{2}[d] \\
  & *{\bullet} \ar@{-}[dl] \ar@{}_{8}[r] & & & & \\
  *{\bullet} \ar@{}_{7}[r] & & & &&
  }
\longrightarrow \mbox{(b)}
\xymatrix{ 
& &  *{\bullet} \ar@{-}[d] \ar@{-}[dll]  \ar@{-}[drr]  \ar@{-}[dlldd] \ar@{}^{9}[r]
 & & &  \\
*{\bullet} \ar@{--}[r] \ar@{}_{7}[d]  & *{\bullet} \ar@{-}[r]  \ar@{}_{5}[d] & *{\bullet} \ar@{-}[r] \ar@{}^{2}[d] \ar@{-}[dl] & *{\bullet} \ar@{--}[r] \ar@{}^{3}[d] & *{\bullet} \ar@{-}[r] \ar@{}^{4}[d] & *{\bullet}  \ar@{}^{1}[d] \\
  & *{\bullet} \ar@{--}[dl] \ar@{}_{6}[r] & & & & \\
  *{\bullet} \ar@{}_{8}[r] & & & &&
  }\\
\longrightarrow \mbox{(c)}
\xymatrix{ 
 & &  *{\bullet} \ar@{-}[d] \ar@{}^{1}[r]  &  &  \\
 & &  *{\bullet} \ar@{=}[d] \ar@{-}[dr]  \ar@{--}[ldd] \ar@{}^{3}[r]
 & &   \\
*{\bullet} \ar@{--}[r] \ar@{}_{8}[d]  & *{\bullet} \ar@{-}[r]  \ar@{--}[ur] \ar@{}_{6}[d] & *{\bullet} \ar@{--}[r] \ar@{}^{4}[d] \ar@{-}[dl] & *{\bullet} \ar@{-}[r] \ar@{}^{5}[d] & *{\bullet}  \ar@{}^{2}[d]  \\
  & *{\bullet} \ar@{--}[dl] \ar@{}_{7}[r] & & &  \\
  *{\bullet} \ar@{}_{9}[r] & & & &
}
\end{multline*}
\caption{The case $T_{4,3,3}$} \label{Fig334}
\end{figure}
By the transformations
\[ \alpha_2,  \alpha_3, \alpha_4, \alpha_5, \alpha_6; \beta_3, \alpha_1, \alpha_2; \gamma_3, \gamma_4, \gamma_8, \gamma_9 \]
one obtains the graph $S_{3,3,3}$ with the correct ordering.
 
b) The case $T_{5,4,2}$. The sequence of transformations
\[
\beta_2, \beta_3, \beta_4, \beta_5; \beta_4; \beta_8,\beta_7,\beta_6; \beta_9, \beta_8, \beta_7; \beta_7, \beta_6, \beta_5, \beta_4
\]
transforms the diagram of Fig.~\ref{Fig245}(a) to the graph Fig.~\ref{Fig245}(b). The sequence of transformations
\[
\beta_5; \beta_9, \beta_8, \beta_7; \beta_7,\beta_6,\beta_5, \beta_4; \beta_6, \beta_5; \beta_8, \beta_7, \beta_6; \beta_6, \beta_5, \beta_4; \alpha_9, \alpha_8, \ldots , \alpha_3; \alpha_3
\]
transforms this diagram to the diagram of Fig.~\ref{Fig245}(c).
\begin{figure}
\begin{multline*}
\mbox{(a)}
\xymatrix{ 
& & &  *{\bullet} \ar@{==}[d] \ar@{-}[dr]  \ar@{-}[ldd]  \ar@{}^{10}[r]
 & & &  & \\
*{\bullet} \ar@{-}[r] \ar@{}_{6}[d] & *{\bullet} \ar@{-}[r] \ar@{}_{7}[d]  & *{\bullet} \ar@{-}[r]   \ar@{-}[ur] \ar@{}_{8}[d] & *{\bullet} \ar@{-}[r] \ar@{}^{1}[d] \ar@{-}[dl] & *{\bullet} \ar@{-}[r] \ar@{}^{5}[d] & *{\bullet} \ar@{-}[r] \ar@{}^{4}[d] & *{\bullet} \ar@{-}[r] \ar@{}^{3}[d] & *{\bullet}  \ar@{}^{2}[d] \\
  & & *{\bullet} \ar@{}_{9}[r] & & & & &   }\\
\longrightarrow \mbox{(b)}
\xymatrix{ 
& & &  *{\bullet} \ar@{-}[d] \ar@{-}[drr] \ar@{-}[dll] \ar@{}^{10}[r]
 & & &  & \\
*{\bullet} \ar@{-}[r] \ar@{}_{8}[d] & *{\bullet} \ar@{--}[r] \ar@{}_{9}[d]  & *{\bullet} \ar@{-}[r] \ar@{}_{6}[d] & *{\bullet} \ar@{-}[r] \ar@{}^{3}[d] \ar@{-}[dl] & *{\bullet} \ar@{--}[r] \ar@{}^{4}[d] & *{\bullet} \ar@{-}[r] \ar@{}^{5}[d] & *{\bullet} \ar@{-}[r] \ar@{}^{2}[d] & *{\bullet}  \ar@{}^{1}[d] \\
  & & *{\bullet} \ar@{}_{7}[r] & & & & &   }\\
\longrightarrow \mbox{(c)}
\xymatrix{ 
 & & &  *{\bullet} \ar@{-}[d] \ar@{}^{1}[r]  &  & &  \\
 & & &  *{\bullet} \ar@{==}[d] \ar@{-}[dr]  \ar@{-}[ldd] \ar@{}^{2}[r]
 & &  &  \\
*{\bullet} \ar@{--}[r] \ar@{}_{10}[d] & *{\bullet} \ar@{--}[r] \ar@{}_{9}[d] & *{\bullet} \ar@{-}[r]  \ar@{-}[ur] \ar@{}_{5}[d] & *{\bullet} \ar@{-}[r] \ar@{}^{3}[d] \ar@{-}[dl] & *{\bullet} \ar@{-}[r] \ar@{}^{4}[d] & *{\bullet} \ar@{--}[r] \ar@{}^{6}[d] & *{\bullet}  \ar@{}^{8}[d]  \\
 & & *{\bullet} \ar@{}_{7}[r] & & & & & }
\end{multline*}
\caption{The case $T_{5,4,2}$} \label{Fig245}
\end{figure}
The sequence of transformations
\[
\alpha_6, \alpha_5, \alpha_4; \beta_3, \alpha_1, \alpha_2; \gamma_2, \gamma_8, \gamma_9
\]
transforms the graph of Fig.~\ref{Fig245}(c) to the graph $S_{2,4,4}$ with the correct ordering. 

c) The case $T_{7,3,2}$. The sequence of transformations
\[
\beta_2, \beta_3, \beta_4, \beta_5, \beta_6, \beta_7; \beta_6; \beta_9,\beta_8;  \beta_{10}, \beta_9; \beta_9, \beta_8, \beta_7, \beta_6
\]
transforms the diagram of Fig.~\ref{Fig237}(a) to the graph Fig.~\ref{Fig237}(b). The sequence of transformations
\begin{multline*}
\beta_{10}, \beta_9; \beta_7, \beta_6, \beta_5; \beta_7, \beta_6, \beta_5; \beta_9, \beta_8; \beta_{10}, \beta_9; \beta_9, \ldots , \beta_5; \beta_9, \ldots , \beta_5; \\
\beta_8, \ldots , \beta_4; \beta_8, \ldots , \beta_4; \beta_8, \ldots , \beta_4; \beta_{10}; \beta_{10}, \beta_9; \beta_9, \ldots, \beta_4; \beta_{10}; \beta_{10}, \ldots , \beta_4; \\
\beta_8; \beta_8, \beta_7; \beta_7, \beta_6; \beta_6, \beta_5; \beta_9, \beta_8, \beta_7, \beta_6; \beta_6, \beta_5, \beta_4; \alpha_{10}, \alpha_9, \ldots , \alpha_3; \alpha_3
\end{multline*}
transforms this graph to the graph of Fig.~\ref{Fig237}(c).
\begin{figure}
\begin{multline*}
\mbox{(a)}
\xymatrix{ 
 & &  *{\bullet} \ar@{==}[d] \ar@{-}[dr]  \ar@{-}[ldd]  \ar@{}^{11}[r]
 & & &  & & & \\
*{\bullet} \ar@{-}[r] \ar@{}^{8}[d]  & *{\bullet} \ar@{-}[r]  \ar@{-}[ur] \ar@{}^{9}[d] & *{\bullet} \ar@{-}[r] \ar@{}^{1}[d] \ar@{-}[dl] & *{\bullet} \ar@{-}[r] \ar@{}^{7}[d] & *{\bullet} \ar@{-}[r] \ar@{}^{6}[d] & *{\bullet} \ar@{-}[r] \ar@{}^{5}[d] &*{\bullet} \ar@{-}[r] \ar@{}^{4}[d] & *{\bullet} \ar@{-}[r] \ar@{}^{3}[d] & *{\bullet}  \ar@{}^{2}[d] \\
  & *{\bullet} \ar@{}_{10}[r] & & & & & & &
   }\\
\longrightarrow \mbox{(b)}
\xymatrix{ 
 & &  *{\bullet} \ar@{-}[d] \ar@{-}[drr] \ar@{-}[dll]  \ar@{}^{11}[r]
 & & &  & & & \\
*{\bullet} \ar@{--}[r] \ar@{}^{10}[d]  & *{\bullet} \ar@{-}[r] \ar@{}^{8}[d] & *{\bullet} \ar@{-}[r] \ar@{}^{5}[d] \ar@{-}[dl] & *{\bullet} \ar@{--}[r] \ar@{}^{6}[d] & *{\bullet} \ar@{-}[r] \ar@{}^{7}[d] & *{\bullet} \ar@{-}[r] \ar@{}^{4}[d] &*{\bullet} \ar@{-}[r] \ar@{}^{3}[d] & *{\bullet} \ar@{-}[r] \ar@{}^{2}[d] & *{\bullet}  \ar@{}^{1}[d] \\
  & *{\bullet} \ar@{}_{9}[r] & & & & & & &
   }\\
\longrightarrow \mbox{(c)}
\xymatrix{ 
 & &   *{\bullet} \ar@{-}[d] \ar@{}^{1}[r]  &  & &  & &   \\
& &  *{\bullet} \ar@{==}[d] \ar@{-}[dr]  \ar@{-}[ldd]  \ar@{}^{2}[r]
 & & &  & &  \\
*{\bullet} \ar@{--}[r] \ar@{}^{11}[d]  & *{\bullet} \ar@{-}[r]  \ar@{-}[ur] \ar@{}^{5}[d] & *{\bullet} \ar@{-}[r] \ar@{}^{3}[d] \ar@{-}[dl] & *{\bullet} \ar@{-}[r] \ar@{}^{4}[d] & *{\bullet} \ar@{--}[r] \ar@{}^{6}[d] & *{\bullet} \ar@{--}[r] \ar@{}^{8}[d] &*{\bullet} \ar@{--}[r] \ar@{}^{9}[d] & *{\bullet}  \ar@{}^{10}[d] \\
  & *{\bullet} \ar@{}_{7}[r] & & & & & & 
   }
\end{multline*}
\caption{The case $T_{7,3,2}$} \label{Fig237}
\end{figure}
The sequence of transformations
\[
\alpha_6, \alpha_5, \alpha_4; \alpha_5 ; \alpha_{10}, \alpha_9, \alpha_8, \alpha_7; \beta_3, \alpha_1, \alpha_2; \gamma_2, \gamma_7, \gamma_9, \gamma_{11}
\]
transforms the graph of Fig.~\ref{Fig237}(c) to the graph $S_{2,3,6}$ with the correct ordering.
\end{proof}

\begin{remark} The graphs of Fig.~\ref{Fig334}(b), Fig.~\ref{Fig245}(b), and Fig.~\ref{Fig237}(b) are minimal in the sense of \cite{Ebeling96, Il}: A {\em monotone cycle} in a Coxeter-Dynkin diagram is a sequence of vertices $(\delta_{i_1}, \ldots , \delta_{i_k})$ where $i_1 <  i_2 <  \ldots  < i_k$ and $\delta_{i_j}$ is connected to $\delta_{i_{j+1}}$ for $j=1, \ldots , k$ and $j+1$ taken modulo $k$. After the changes of orientations
\[ \mbox{Fig.~\ref{Fig334}(b)}: \gamma_2, \gamma_3, \gamma_5, \gamma_6; \quad \mbox{Fig.~\ref{Fig245}(b)}: \gamma_3, \gamma_4, \gamma_6, \gamma_7; \quad \mbox{Fig.~\ref{Fig237}(b)}: \gamma_5, \gamma_6, \gamma_8, \gamma_9; 
\]
each graph contains exactly one edge of negative weight and after removing this edge it does not contain any monotone cycles.
\end{remark}

\begin{remark} Theorem~\ref{thm:T-S} shows that the graphs $S_{2,3,6}$, $S_{2,4,4}$, and $S_{3,3,3}$ of \cite[Table~3.4.2]{EbSLN} can also be realized as Coxeter-Dynkin diagrams of singularities.
\end{remark}

\section*{Acknowledgements}
This note originated from a question of C.~Hertling. I am grateful to him for useful discussions and helpful comments. I would also like to thank the referee for carefully reading the paper and for useful comments which helped to improve the paper.

\section*{References}

\bigskip
\noindent Leibniz Universit\"{a}t Hannover, Institut f\"{u}r Algebraische Geometrie,\\
Postfach 6009, D-30060 Hannover, Germany \\
E-mail: ebeling@math.uni-hannover.de\\

\end{document}